\theoremstyle{plain}
\newtheorem{theorem}{Theorem}[section]
\newtheorem{lemma}[theorem]{Lemma}
\newtheorem{prop}[theorem]{Proposition}
\newtheorem{cor}[theorem]{Corollary}
\theoremstyle{definition}
\newtheorem{deff}[theorem]{Definition}
\newcommand{\Z}{\mathbb{Z}}
\newcommand{\Q}{\mathbb{Q}}
\newcommand{\C}{\mathbb{C}}
\newcommand{\BAG}{B^A(G)}
\newcommand{\BAH}{B^A(H)}
\newcommand{\gBAG}{\widetilde{B^A}(G)}
\newcommand{\gBAH}{\widetilde{B^A}(H)}
\newcommand{\MAG}{\mathcal{M}^A_G}
\newcommand{\MAH}{\mathcal{M}^A_H}
\newcommand{\SG}{\mathcal{S}_G}
\newcommand{\SH}{\mathcal{S}_H}
\title{Species isomorphisms of fibered Burnside rings}
\author{Benjam\'in Garc\'ia}
\affil{Centro de Ciencias Matem\'aticas,\\
Universidad Nacional Aut\'onoma de M\'exico\\ 
Morelia, Mich., M\'exico\\ 
benjamingarcia@matmor.unam.mx}
\begin{document}

\maketitle

\begin{abstract}
In this note, we present a notion of species isomorphism for fibered Burnside rings. We prove that a species isomorphism can be restricted to an isomorphism of mark tables, that it can be extended to an isomorphism between the ghost rings and we provide three additional characterizations of this concept.
\end{abstract}

{\bf Keywords:} Isomorphism problem, fibered Burnside ring, linear character.

\section{Introduction}

The \textit{Isomorphism Problem} of a representation ring is the question of whether two finite groups having isomorphic representation rings are isomorphic. This problem has been extensively studied for several representation rings and group algebras, with a negative answer in some cases, for example, there are pairs of non-isomorphic groups having isomorphic Burnside rings (see Kimmerle and Roggenkamp \cite{KimRog}, Raggi-Cárdenas and Valero-Elizondo \cite{RagVal1}, Thévenaz \cite{Thev}). 

On the other hand, the study of isomorphisms between representation rings often reveals common invariants preserved in the rings, in times leading to a positive answer to the isomorphism problem within certain classes of groups, e.g., if $G$ and $H$ are finite groups with isomorphic monomial representation rings, then if $G$ is nilpotent so is $H$, and if $G$ is either abelian or has square-free order, then $G\cong H$, while the general question remains open (see Müller \cite{Mull1,Mull2}). 

In the case of the Burnside ring, one feature that all known counterexamples share is that they come from isomorphisms of mark tables, which is equivalent to have an isomorphism betwen the Burnside rings preserving the standard bases. The latter kind of isomorphism has been considered by Raggi and Valero in \cite{RagValGlobal} for global representation rings, presented as the notion of \textit{species isomorphism}.

If $A$ is an abelian group, the \textit{$A$-fibered Burnside ring of a finite group} $G$, denoted by $B^A(G)$, is the Grothendieck group of the category of \textit{$A$-fibered $G$-sets}, introduced by Dress in \cite{Dress}. For the purposes of this note, we will work with a presentation of this ring in terms of a standard basis. The importance of $A$-fibered Burnside rings lies in their connection to the study of canonical induction formulae (see Boltje \cite{Bolt1,Bolt2}), and they are a source of examples in the theory of biset functors. One important case is $B^{\C^{\times}}(G)$, naturally isomorphic to the ring of monomial representations $D(G)$.

The aim of this note is to present a notion of \textit{species isomorphism} for fibered Burnside rings, as a natural analogy to isomorphisms of mark tables and species isomorphisms for global representation rings. In Section 3, we prove that a species isomorphism implies an isomorphism of mark tables, and that it can always be extended to an isomorphism between the ghost rings. The main result provides 3 characterizations of species isomorphisms that might be useful in future studies of the isomorphism problem.

\subsection*{Notation}

The letters $G$ and $H$ denote finite groups, $C_n$ stands for a cyclic group of order $n$, and we write $\mathcal{S}_G$ for the set of subgroups of $G$. If $K\leq G$ and $g,x\in G$, then $^gK=gKg^{-1}$, $K^g=g^{-1}Kg$, $^gx=gxg^{-1}$ and $x^g=g^{-1}xg$. If $G$ acts on a set $X$, then $G\cdot x$ stands for the orbit of $x\in X$ and $y=_G{x}$ means that $y\in G\cdot x$, $G\backslash X$ stands for the set of orbits and $[G\backslash X]$ is a set of representatives, $X^G$ denotes the set of fixed points. If $M$ is a free abelian group with a fixed basis $\mathfrak{B}$, then $\textit{coeff}_v(m)=a_v$ for $m\in M$ and $v\in \mathfrak{B}$ if $m=\sum_{v\in \mathfrak{B}}a_vv$, $a_v\in \Z$.

\section{Fibered Burnside rings}

Let $A$ be an abelian group and $G$ be a finite group, and set
$$\mathcal{M}^A_G=\left\{(K,\phi)|K\leq G, \phi\in Hom(K,A)\right\},$$
which is sometimes referred to as the \textit{set of subcharacters of G}. The group $G$ acts on $\MAG$ by $^g(K,\phi)=({^gK},{^g\phi})$ for $(K,\phi)\in \MAG$, $g\in G$, where $^g\phi(x)=\phi(x^g)$ for $x\in {^gK}$. We write $[K,\phi]_G$ for the $G$-orbit of a subcharacter $(K,\phi)$, and $N_G(K,\phi)$ for its stabilizer, and the contaiments $K\leq N_G(K,\phi)\leq N_G(K)$ are obvious. There is a partial order on $\mathcal{M}^A_G$ given by $(K,\phi)\leq (L,\psi)$ if $K\leq L$ and $\psi|_L=\phi$, turning $\mathcal{M}^A_G$ into a $G$-poset. This induces a partial order on $G\backslash \mathcal{M}^A_G$ by setting $[K,\phi]_G\leq [L,\psi]_G$ if there is $g\in G$ such that $(K,\phi)\leq {^g(L,\psi)}$. 

The \textit{$A$-fibered Burnside ring of $G$} is defined as the free abelian group
\begin{align}
    B^A(G)=\bigoplus_{[K,\phi]_G\in G\backslash\mathcal{M}^A_G}\Z [K,\phi]_G
\end{align}
with  multiplication given by
\begin{align}
    [K,\phi]_G\cdot [L,\psi]_G= \sum_{s\in [K\backslash G/L]}[K\cap {^sL}, \phi\cdot {^s\psi}]_G,
\end{align}
where for subgroups $K,L\leq G$ and morphisms $\phi:K\longrightarrow A$ and $\psi: L\longrightarrow A$, $\phi\cdot \psi=\phi|_{K\cap L}\psi|_{K\cap L}$, and $[K\backslash G/L]$ is a set of representatives of the double cosets $KsL$. Then $\BAG$ is a commutative associative ring with identity element $[G,1]_G$. 

It is straightforward that the subring generated by the elements $[K,1]_G$ is isomorphic to the \textit{Burnside ring} $B(G)$ if we identify $[K,1]_G$ with the isomorphim class of $G/K$. Note also that $\BAG\cong B(G)$ if and only if $A$ has trivial $|G|$-torsion: if $A$ has trivial $|G|$-torsion, then all the subcharacters of $G$ are trivial, and if there is $1\neq a\in A$ such that $a^{|G|}=1$, then there is a non-trivial subcharacter which cannot be $G$-conjugate to one of the form $(K,1)$, and so $\BAG$ has rank strictly greater than $B(G)$.

Expressing Equation (2) in terms of the basis $G\backslash \MAG$, we get
\begin{align}
    [K,\phi]_G\cdot [L,\psi]_G&=\sum_{[T,\tau]_G\in G\backslash \MAG}\mu^{G,(K,\phi)\times(L,\psi)}_{(T,\tau)} [T,\tau]_G
\end{align}
where
\begin{align}
    \mu^{G,(K,\phi)\times(L,\psi)}_{(T,\tau)}=|\{KsL\in K\backslash G/L\;|\;[T,\tau]_G&=[K\cap {^sL},\phi\cdot {^s\psi}]_G\}|.
\end{align}
These numbers will be referred to as the \textit{multiplication coefficients} of $\MAG$, and the following lemma summarizes some of their properties.

\begin{lemma}\thlabel{BurnsEls}
Let $(K,\phi), (L,\psi), (T,\tau)\in \MAG$. The following assertions hold:
\begin{enumerate}
    \item $\mu^{G,(^gK,^g\phi)\times(^xL,^x\psi)}_{(^yT,^y\tau)}=\mu^{G,(K,\phi)\times(L,\psi)}_{(T,\tau)}$ for all $g,x,y\in G$,
    \item $\mu^{G,(L,\psi)\times(K,\phi)}_{(T,\tau)}=\mu^{G,(K,\phi)\times(L,\psi)}_{(T,\tau)}$,
    \item $0\neq \mu^{G,(K,\phi)\times(L,\psi)}_{(T,\tau)}$ if and only if there is $g\in G$ such that $[T,\tau]_G=[K\cap {^gL},\phi\cdot {^g\psi}]_G$,
    \item $0\neq \mu^{G,(K,\phi)\times(K,\zeta)}_{(K,\zeta)}$ for every $\zeta\in Hom(K,A)$ if and only if $\phi =1$.
\end{enumerate}
\end{lemma}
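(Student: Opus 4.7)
The plan is to prove each assertion directly from the defining formula (4), which counts the double cosets $KsL$ for which $[K\cap {^sL},\phi\cdot {^s\psi}]_G=[T,\tau]_G$.

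For (1), I would observe that the right-hand side of (4) is manifestly a function of the $G$-orbits $[K,\phi]_G$, $[L,\psi]_G$, $[T,\tau]_G$ alone, because the indexing set $K\backslash G/L$ is in natural bijection with $^gK\backslash G/^xL$ via $s\mapsto gsx^{-1}$, and under this bijection the subcharacter $(K\cap {^sL},\phi\cdot {^s\psi})$ is sent to a $G$-conjugate of itself. I would write out this reindexing explicitly so the coefficient is seen to be well-defined on orbits.

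For (2), I would use the bijection $K\backslash G/L\to L\backslash G/K$, $s\mapsto s^{-1}$, and check that $(L\cap {^{s^{-1}}K},\psi\cdot{^{s^{-1}}\phi})$ is $G$-conjugate to $(K\cap {^sL},\phi\cdot {^s\psi})$ by conjugating by $s$. Assertion (3) is essentially immediate: the coefficient is a nonnegative sum, and a term contributes precisely when the orbit equality of (3) holds for some $g\in G$ (taking $g=s$ suffices, and conversely any such $g$ lies in the appropriate double coset).

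Part (4) is the substantive one. The forward direction (assuming $\phi=1$) is immediate by taking $g=1$ in the criterion from (3): $[K\cap K,1\cdot\zeta]_G=[K,\zeta]_G$. For the converse, I specialize the hypothesis to $\zeta=1$. By (3) there is $g\in G$ with
\[
[K,1]_G=[K\cap{^gK},\phi\cdot 1]_G=[K\cap{^gK},\phi|_{K\cap{^gK}}]_G.
\]
Since the subgroup components must be $G$-conjugate, $|K\cap{^gK}|=|K|$, forcing $K={^gK}$, i.e.\ $g\in N_G(K)$. So $[K,1]_G=[K,\phi]_G$, hence there exists $h\in N_G(K)$ with ${^h\phi}=1$, and therefore $\phi=1$. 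The only step requiring care is noting that $[K,\phi]_G=[K',\phi']_G$ implies $K$ and $K'$ are $G$-conjugate and hence of the same order, which I would spell out explicitly.

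I do not expect a serious obstacle; the bulk of the work is bookkeeping with the double-coset formula (4) and with the action of $G$ on $\MAG$. The only mildly subtle point is the converse of (4), where one has to recognize that evaluating the hypothesis at $\zeta=1$ collapses all the information about $\zeta$ and yields $\phi=1$ directly, rather than attempting to use arbitrary $\zeta$.
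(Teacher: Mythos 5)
Your proposal is correct and follows essentially the same route as the paper: parts (1)--(3) are read off from the defining double-coset count, and the converse of part (4) is obtained by specializing to $\zeta=1$, using that $[K,1]_G=[K\cap{}^{s}K,\phi|_{K\cap{}^{s}K}]_G$ forces $K\cap{}^{s}K=K$ by an order comparison and hence $\phi=1$. Your version merely spells out the reindexing bijections and the order argument that the paper leaves implicit.
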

\begin{proof}
Parts (1) - (3) follow from the definition. To prove Part (4), note for $\zeta=1$ that $0\neq \mu^{G,(K,\phi)\times(K,1)}_{(K,1)}$ if and only if there is some $s\in G$ such that $[K\cap{^sK},{\phi}|_{K\cap{^sK}}]_G=[K,1]_G$, and so there is some $g\in G$ such that $K={^g(K\cap ^sK)}$ and ${^{g}\phi}=1$, hence $\phi=1$. The converse is easy.
\end{proof}

For $(K,\phi),(L,\psi)\in \mathcal{M}^A_G$, set
\begin{align*}
    \gamma^G_{(K,\phi),(L,\psi)}&=|\{KsL\in K\backslash G/L\;|\;(K,\phi)\leq {^s(L,\psi)}\}|\\
    &=|\{sL\in G/L\;|\;(K,\phi)\leq {^s(L,\psi)}\}|.
\end{align*}
These numbers will be referred to as the \textit{marks} of $\MAG$, for they generalize the notion of marks of the Burnside ring.

\begin{lemma}\thlabel{markprops}
Let $(K,\phi),(L,\lambda)\in \MAG$. The following assertions hold:
\begin{enumerate}
    \item $\gamma^G_{(^gK,^g\phi),(^xL,^x\psi)}=\gamma^G_{(K,\phi),(L,\psi)}$ for all $g,x\in G$,
    \item $0\neq \gamma^G_{(K,\phi),(L,\psi)}$ if and only if $[K,\phi]_G\leq [L,\psi]_G$,
    \item $\gamma^G_{(K,\phi),(K,\phi)}=[N_G(K,\phi):K]$,
    \item  $\gamma^G_{(K,1),(L,1)}=|(G/L)^K|$,
    \item $\gamma^G_{(K,\phi),(L,\psi)}=\mu^{G,(K,\phi)\times (L,\psi^{-1})}_{(K,1)}$.
\end{enumerate}
\end{lemma}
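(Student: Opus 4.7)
The plan is to verify the five assertions in turn; parts (1), (2) and (4) are essentially bookkeeping, while (3) and (5) require a small computation.

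For (1), the bijection $sL\mapsto gsx^{-1}\cdot {^x}L$ from $G/L$ to $G/({^x}L)$, combined with the $G$-equivariance of the partial order on $\MAG$, matches the indexing sets of the two marks: one checks that $s'=gsx^{-1}$ gives ${^{s'}}({^x}L,{^x}\psi)={^{gs}}(L,\psi)$, so $({^g}K,{^g}\phi)\leq {^{s'}}({^x}L,{^x}\psi)$ is equivalent to $(K,\phi)\leq{^s}(L,\psi)$. For (2), the equivalence is the very definition: a non-zero mark means some representative $s$ realises $(K,\phi)\leq {^s}(L,\psi)$, which is exactly $[K,\phi]_G\leq[L,\psi]_G$. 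For (4), $(K,1)\leq {^s}(L,1)$ reduces to $K\leq {^s}L$, and this holds iff $K\cdot sL=sL$ in $G/L$, so the set being counted is precisely $(G/L)^K$.

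For (3), I would count cosets $sK$ with $(K,\phi)\leq {^s}(K,\phi)$. The inclusion $K\leq {^s}K$ together with $|K|=|{^s}K|$ forces $K={^s}K$, hence $s\in N_G(K)$; the character condition ${^s}\phi=\phi$ then says $s\in N_G(K,\phi)$. Conversely, any $s\in N_G(K,\phi)$ clearly satisfies $(K,\phi)\leq {^s}(K,\phi)$, so the admissible cosets $sK$ form $N_G(K,\phi)/K$, giving the count $[N_G(K,\phi):K]$.

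For (5), the task is to match the indexing sets of the two counts. Unpacking the orbit equality $[K\cap{^s}L,\phi\cdot{^s}\psi^{-1}]_G=[K,1]_G$: a conjugating element $g\in G$ forces $|K\cap{^s}L|=|K|$, hence $K\leq {^s}L$ and $g\in N_G(K)$, and then the condition ${^g}(\phi\cdot{^s}\psi^{-1})=1$ on $K$ reduces, via the bijection $x\mapsto x^g$ of $K$ and the abelianity of $A$, to $\phi={^s}\psi|_K$, which is precisely $(K,\phi)\leq {^s}(L,\psi)$; the converse is immediate. The final numerical point is that when $K\leq {^s}L$ one has $ksL=sL$ for every $k\in K\leq sLs^{-1}$, so the double coset $KsL$ coincides with the single coset $sL$; hence the count over $K\backslash G/L$ defining $\mu^{G,(K,\phi)\times(L,\psi^{-1})}_{(K,1)}$ matches the count over $G/L$ defining $\gamma^G_{(K,\phi),(L,\psi)}$. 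This last identification of double with single cosets is the main obstacle in the argument; everything else is a straightforward translation of definitions.
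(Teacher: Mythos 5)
Your proof is correct and, for part (5) --- the only part the paper proves rather than cites from Boltje --- it follows essentially the same unpacking of the orbit condition $[K\cap{^sL},\phi\cdot{^s\psi^{-1}}]_G=[K,1]_G$ into $K\leq{^sL}$ and $\phi={^s\psi}|_K$. Your direct verifications of parts (1)--(4), and your explicit check that qualifying double cosets $KsL$ collapse to single cosets $sL$, supply details the paper outsources or builds into its two-line definition of $\gamma^G_{(K,\phi),(L,\psi)}$, but the route is the same.
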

\begin{proof}
All the assertions are known to \cite[Section 1]{Bolt1}, with the exception of Part (5): $[K,1]_G=[K\cap {^sL}, \phi\cdot {^s\psi^{-1}}]_G$ if and only if there is some $g\in G$ such that $(^gK,1)=(K\cap {^sL}, \phi\cdot {^s\psi^{-1}})$, but this implies that $g\in N_G(K)$, $K=K\cap {^sL}$ and $1=\phi\left({^s\psi^{-1}}|_K\right)$, and the latter happens if and only if $K\leq {^sL}$ and $\phi={^s\psi}|_{K}$. Therefore,
\begin{align*}
    \{KsL\in K\backslash G/L\;|\; [K,1]_G=[K\cap {^sL}, \phi\cdot {^s\psi^{-1}}]_G\} \\
    = \{KsL\in K\backslash G/L\; &|\; (K,\phi)\leq ({^sL}, {^s\psi})\}.
\end{align*}
\end{proof}

The map
\begin{align}
    \Phi^A_G:B^A(G)\longrightarrow \prod_{K\leq G}\Z Hom(K,A), [L,\psi]_G\mapsto \left( \sum_{\phi\in Hom(K,A)}\gamma^G_{(K,\phi),(L,\psi)}\phi\right)_{K\leq G}
\end{align}
is an injective ring homomorphism known as the \textit{mark morphism} \cite[Section 2]{Bolt2}. The group $G$ acts on $\prod_{K\leq G}\Z Hom(K,A)$ by $^g(\alpha_K)_{K\leq G}=(^g\alpha_{K^g})_{K\leq G}$, and the image of $\Phi^A_G$ lies in the subring $\gBAG=\left(\prod_{K\leq G}\Z Hom(K,A)\right)^G$, known as the \textit{ghost ring} of $\BAG$. A basis for $\gBAG$ is given by the elements $\widetilde{\phi}\in \prod_{K\leq G}\Z Hom(K,A)$ for $\phi\in Hom(K,A)$, whose $L$-th entry is
\begin{align}
    \widetilde{\phi}_{L}={^g\left(\sum_{n\in [N_G(K)/N_G(K,\phi)]}{^n\phi}\right)}=\sum_{n\in [N_G(K)/N_G(K,\phi)]}{^{gn}\phi}
\end{align}
if $L={^gK}$ for some $g\in G$, and it is zero if $L\neq_G K$. If $\psi\in Hom(L,A)$, then $\widetilde{\phi}=\widetilde{\psi}$ if and only if $[K,\phi]_G=[L,\psi]_G$, and so $\BAG$ and $\gBAG$ have the same rank.

We can examine the multiplication coefficients a bit further. For $(K,\phi),(L,\psi)\in \MAG$, we have that
\begin{align}
    \Phi^A_G([K,\phi]_G)\Phi^A_G([L,\psi]_G)=\sum_{[T,\tau]_G\in G\backslash \MAG}\mu^{G,(K,\phi)\times(L,\psi)}_{(T,\tau)} \Phi^A_G\left([T,\tau]_G\right),
\end{align}
so if $U\leq G$ and $\omega \in Hom(U,A)$, then looking at the coefficient of $\omega$ in the $U$-th entry of $\Phi_G^A\left([K,\phi]_G\cdot [L,\psi]_G\right)$, we get
\begin{align}
    \sum_{\lambda\in Hom(U,A)}\gamma^G_{(U,\omega \lambda^{-1}),(K,\phi)}\gamma^G_{(U,\lambda),(L,\psi)}=\sum_{[U,\omega]_G\leq [T,\tau]_G\in G\backslash \MAG}\mu^{G,(K,\phi)\times(L,\psi)}_{(T,\tau)}\gamma^G_{(U,\omega),(T,\tau)}.
\end{align}
as $\gamma^G_{(U,\omega),(T,\tau)}=0$ if $[U,\omega]_G\not\leq [T,\tau]_G$. Setting
\begin{align}
    \alpha^{G,(K,\phi)\times (L,\psi)}_{(U,\omega)}&=\sum_{\lambda\in Hom(U,A)}
    \gamma^G_{(U,\omega {\lambda}^{-1}),(K,\phi)}\gamma^G_{(U,\lambda),(L,\psi)},\\
    \beta^{G,(K,\phi)\times (L,\psi)}_{(U,\omega)}&=\sum_{[U,\omega]_G< [T,\tau]_G\in G\backslash \MAG}\mu^{G,(K,\phi)\times(L,\psi)}_{(T,\tau),
    }\gamma^G_{(U,\omega),(T,\tau)},
\end{align}
we get to the identity
\begin{align}
    \mu^{G,(K,\phi)\times(L,\psi)}_{(U,\omega)} &=\frac{|U|}{|N_G(U,\omega)|}\left(\alpha^{G,(K,\phi)\times (L,\psi)}_{(U,\omega)}- \beta^{G,(K,\phi)\times (L,\psi)}_{(U,\omega)}\right).
\end{align}

\section{Species isomorphisms}

\begin{deff}
Let $A$ be an abelian group, $G$ and $H$ be finite groups. A ring isomorphism $\Theta: B^A(G) \longrightarrow B^A(H)$ is a \textit{species isomorphism}  if $\Theta([K,\phi]_G)\in H\backslash\MAH$ for every $[K,\phi]_G\in G\backslash\MAG$.
\end{deff}

It is straightforward that a species isomorphism sends standard basis to standard basis bijectively. In the following, $\Theta:\BAG\longrightarrow \BAH$ is a species isomorphism.

\begin{lemma}\thlabel{mus}
Let $(K,\phi),(L,\psi), (T,\tau)\in \MAG$. If $\Theta([K,\phi]_G)=[R,\rho]_H$, $\Theta([L,\psi]_H)=[S,\sigma]_H$ and $\Theta([T,\tau]_G)=[U,\omega]_H$, then $\mu^{H,(R,\rho)\times (S,\sigma)}_{(U,\omega)}=\mu^{G,(K,\phi)\times (L,\psi)}_{(T,\tau)}$.
\end{lemma}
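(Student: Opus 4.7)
The plan is to exploit the fact that $\Theta$ is both a ring homomorphism and a bijection of standard bases, so that the multiplication coefficients, which are by definition the coordinates of a product in the standard basis, must be preserved. There is essentially no obstacle: the result is a direct coefficient comparison.

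First I would apply $\Theta$ to the expansion given by Equation (3),
\begin{align*}
[K,\phi]_G \cdot [L,\psi]_G = \sum_{[T',\tau']_G \in G\backslash\MAG} \mu^{G,(K,\phi)\times(L,\psi)}_{(T',\tau')}\,[T',\tau']_G,
\end{align*}
and, using that $\Theta$ is a ring homomorphism, rewrite the image of the left hand side as $[R,\rho]_H \cdot [S,\sigma]_H$, which itself expands via Equation (3) (applied in $\BAH$) as
\begin{align*}
[R,\rho]_H \cdot [S,\sigma]_H = \sum_{[U',\omega']_H \in H\backslash \MAH} \mu^{H,(R,\rho)\times(S,\sigma)}_{(U',\omega')}\,[U',\omega']_H.
\end{align*}

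Next I would use the defining property of a species isomorphism: $\Theta$ sends each standard basis element $[T',\tau']_G$ of $\BAG$ to some standard basis element of $\BAH$, and this assignment is a bijection between $G\backslash \MAG$ and $H\backslash \MAH$. Consequently, the image of the right-hand side of the first equation is
\begin{align*}
\sum_{[T',\tau']_G \in G\backslash\MAG} \mu^{G,(K,\phi)\times(L,\psi)}_{(T',\tau')}\,\Theta([T',\tau']_G),
\end{align*}
which is the expression of the product $[R,\rho]_H \cdot [S,\sigma]_H$ in the standard basis of $\BAH$, indexed now by $\Theta([T',\tau']_G)$ rather than by $[U',\omega']_H$.

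Finally I would compare, for the specific basis element $\Theta([T,\tau]_G) = [U,\omega]_H$, the coefficients in the two expansions of $[R,\rho]_H \cdot [S,\sigma]_H$. Since the standard basis is a free $\Z$-basis, the coefficient on each side is uniquely determined, yielding $\mu^{H,(R,\rho)\times(S,\sigma)}_{(U,\omega)} = \mu^{G,(K,\phi)\times(L,\psi)}_{(T,\tau)}$, as desired. No step here is delicate; the only thing to mind is that the bijection $\Theta$ restricted to standard bases guarantees that the reindexing $[T',\tau']_G \mapsto \Theta([T',\tau']_G)$ produces each basis element of $\BAH$ exactly once, so the coefficient extraction is unambiguous.
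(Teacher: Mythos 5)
Your proposal is correct and follows essentially the same route as the paper: apply $\Theta$ to the product expansion, use that it is a ring isomorphism sending standard basis to standard basis bijectively, and compare coefficients of the two expansions of $[R,\rho]_H\cdot[S,\sigma]_H$. No discrepancies to note.
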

\begin{proof}
Since $\Theta$ is a ring isomorphism, the expressions
$$\Theta([K,\phi]_G\cdot [L,\psi]_G)=\sum_{[T,\tau]_G\in G\backslash \MAG}\mu^{G,(K,\phi)\times(L,\psi)}_{(T,\tau)}\cdot \Theta([(T,\tau)]_G)$$
$$\Theta([K,\phi]_G)\cdot\Theta( [L,\psi]_G)=\sum_{[U,\omega]_H\in H\backslash \MAH}\mu^{H,(R,\rho)\times(S,\sigma)}_{(U,\omega)}\cdot [U,\omega)]_H,$$
must be equal, so if $[U,\omega]_H=\Theta([T,\tau]_G)$, then $\mu^{H,(R,\rho)\times (S,\sigma)}_{(U,\omega)}=\mu^{G,(K,\phi)\times (L,\psi)}_{(T,\tau)}$.
\end{proof}

We now proceed to prove that species isomorphisms preserve the marks.

\begin{lemma}\thlabel{ideal11}
Let $(K,\phi)\in \MAG$. Then $\Z [K,\phi]_G$ is an ideal of $\BAG$ if and only if $[K,\phi]_G=[1,1]_G$.
\end{lemma}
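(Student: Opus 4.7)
The plan is to test the ideal condition against a single, well-chosen basis element of $\BAG$, namely $[1,1]_G$.

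For the easy direction ($\Leftarrow$), suppose $[K,\phi]_G = [1,1]_G$. I would compute directly from the multiplication formula (2): for any $[L,\psi]_G \in G\backslash \MAG$,
\[
[L,\psi]_G\cdot [1,1]_G = \sum_{s\in [L\backslash G/1]}[L\cap {^s1}, \psi\cdot {^s1}]_G = [G:L]\,[1,1]_G,
\]
since $L\cap {^s 1}=1$ and the character is forced to be trivial on the trivial group. Hence every product $[L,\psi]_G\cdot [1,1]_G$ lies in $\Z[1,1]_G$, so by $\Z$-bilinearity $\Z[1,1]_G$ is an ideal.

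For the other direction ($\Rightarrow$), I would use the same element $[1,1]_G$ as a probe. By exactly the same computation (swapping the roles in formula (2)),
\[
[1,1]_G\cdot [K,\phi]_G = \sum_{s\in [1\backslash G/K]}[1\cap{^sK},\, 1\cdot {^s\phi}|_{1}]_G = [G:K]\,[1,1]_G.
\]
If $\Z[K,\phi]_G$ were an ideal, this would force $[G:K][1,1]_G\in \Z[K,\phi]_G$. Because $\{[K',\phi']_G\}$ is a $\Z$-basis of $\BAG$ and $[G:K]\geq 1$, this forces $[K,\phi]_G=[1,1]_G$.

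There is essentially no conceptual obstacle: both directions reduce to the single computation of $[L,\psi]_G\cdot [1,1]_G$ from the defining multiplication formula. The only subtlety worth stating carefully is that $[G:K]$ is a positive integer, so the equation $[G:K][1,1]_G=c\,[K,\phi]_G$ in the free abelian group $\BAG$ indeed forces the two basis vectors to coincide and $c=[G:K]$; this is immediate from the linear independence of the standard basis.
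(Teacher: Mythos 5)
Your proof is correct and follows essentially the same route as the paper: both directions reduce to the computation $[L,\psi]_G\cdot[1,1]_G=[G:L]\,[1,1]_G$, with $[1,1]_G$ used as the probe element and linear independence of the standard basis finishing the converse. No gaps.
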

\begin{proof}
Since $[L,\psi]_G\cdot [1,1]_G=[G:L] [1,1]_G$ for all $(L,\psi)\in \MAG$, it is clear that $\Z [1,1]_G$ is an ideal. Conversely, if $\Z [K,\phi]_G$ is an ideal, then for every $(L,\psi)$, $[L,\psi]_G\cdot [K,\phi]_G=n[K,\phi]_G$ for some $n\in \Z$, in particular, 
$$0\neq [G:K][1,1]_G=[1,1]_G\cdot [K,\phi]_G=n[K,\phi]_G,$$
and so $[K,\phi]_G=[1,1]_G$.
\end{proof}
 
\begin{cor}\thlabel{11to11}
$\Theta([1,1]_G)=[1,1]_H$. 
\end{cor}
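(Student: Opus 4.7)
The plan is to read Lemma \thref{ideal11} as a ring-theoretic characterization of the basis element $[1,1]_G$: it is exactly those standard basis elements whose $\Z$-span is an ideal. Since species isomorphisms send standard basis to standard basis, this characterization transports across $\Theta$.

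More concretely, I would first note that $\Theta([1,1]_G) = [R,\rho]_H$ for some $(R,\rho) \in \MAH$, using that $\Theta$ is a species isomorphism. Next, I would argue that $\Z [R,\rho]_H$ is an ideal of $\BAH$: indeed, by Lemma \thref{ideal11} applied in $G$, the subgroup $\Z[1,1]_G$ is an ideal of $\BAG$, so applying the ring isomorphism $\Theta$ gives that $\Theta(\Z[1,1]_G) = \Z \Theta([1,1]_G) = \Z[R,\rho]_H$ is an ideal of $\BAH$. Finally, applying Lemma \thref{ideal11} in the other direction, this time to $H$, forces $[R,\rho]_H = [1,1]_H$, which is the claim.

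There is no real obstacle here; the only subtlety is the double application of Lemma \thref{ideal11}, once in each of $G$ and $H$, with the species-isomorphism hypothesis used precisely to ensure that the image of $[1,1]_G$ is a single basis element rather than a $\Z$-linear combination (so that Lemma \thref{ideal11} is even applicable on the $H$-side).
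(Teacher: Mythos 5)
Your proof is correct and is exactly the paper's argument: the paper likewise observes that $\Z\Theta([1,1]_G)$ is an ideal of $\BAH$ and invokes \thref{ideal11} to conclude, with your version merely spelling out the two applications of the lemma more explicitly.
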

\begin{proof}
$\Z\Theta([1,1]_G)$ is an ideal of $\BAH$, thus $\Theta([1,1]_G)=[1,1]_H$ by \thref{ideal11}.
\end{proof}
 
\begin{lemma}\thlabel{sgorders}
If $(K,\phi)\in \MAG$ and $\Theta([K,\phi]_G)=[R,\rho]_H$, then $|R|=|K|$.
\end{lemma}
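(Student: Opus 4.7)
The plan is to exploit the multiplicative behavior of $[1,1]_G$ under the ring isomorphism $\Theta$, leveraging \thref{11to11} which already provides $\Theta([1,1]_G)=[1,1]_H$. The key observation is that for any $(L,\psi)\in\MAG$, direct application of formula (2) with second factor $(1,1)$ yields $[L,\psi]_G\cdot[1,1]_G=[G:L]\cdot[1,1]_G$, since $[L\backslash G/1]$ has $[G:L]$ representatives and each summand $[L\cap {}^s 1,\psi\cdot {}^s 1]_G$ collapses to $[1,1]_G$.

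Applied to $(K,\phi)$ and its image $(R,\rho)$, this gives $[K,\phi]_G\cdot[1,1]_G=[G:K]\cdot[1,1]_G$ in $\BAG$ and $[R,\rho]_H\cdot[1,1]_H=[H:R]\cdot[1,1]_H$ in $\BAH$. Applying $\Theta$ to the first identity and using \thref{11to11} therefore forces $[H:R]=[G:K]$.

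To convert this index equality into $|R|=|K|$, I would then establish $|G|=|H|$. The same multiplication formula with both factors equal to $(1,1)$ gives $[1,1]_G\cdot[1,1]_G=|G|\cdot[1,1]_G$, and likewise $[1,1]_H\cdot[1,1]_H=|H|\cdot[1,1]_H$. Applying $\Theta$ to the first and again using \thref{11to11} produces $|G|\cdot[1,1]_H=|H|\cdot[1,1]_H$, so $|G|=|H|$. Combining with $[H:R]=[G:K]$ yields $|R|=|K|$.

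There is no real obstacle here: the argument reduces to two transparent computations of products with $[1,1]$ that, after passing through $\Theta$, force the desired numerical equalities. The only mild care needed is in evaluating the double coset set $[K\backslash G/1]$ and checking that restrictions of subcharacters to the trivial subgroup collapse as expected. One could alternatively phrase the whole argument via \thref{mus}, using $\mu^{G,(K,\phi)\times(1,1)}_{(1,1)}=[G:K]$ and $\mu^{G,(1,1)\times(1,1)}_{(1,1)}=|G|$, but the direct multiplicative form above seems cleanest.
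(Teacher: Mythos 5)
Your proposal is correct and follows essentially the same route as the paper: both arguments compute products with $[1,1]$, apply $\Theta$ together with \thref{11to11} to deduce $|G|=|H|$ and $[H:R]=[G:K]$, and combine these to conclude $|R|=|K|$. The only cosmetic difference is the order in which the two numerical equalities are derived.
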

\begin{proof}
Observe that
$$|H|\cdot [1,1]_H=[1,1]_H\cdot [1,1]_H=\Theta([1,1]_G\cdot [1,1]_G)=|G|\cdot\Theta([1,1]_G)=|G|\cdot[1,1]_H$$
and so $|H|=|G|$. Then
$$[H:R]\cdot [1,1]_H=[R,\rho]_H\cdot [1,1]_H=\Theta([K,\phi]_G\cdot [1,1]_G)=[G:K]\cdot [1,1]_H$$
and so $|R|=|K|$.
\end{proof}

\begin{lemma}\thlabel{1stEntr}
Let $\phi,\psi \in Hom(K,A)$. If $\Theta([K,\phi]_G)=[R,\rho]_H$ and $\Theta([K,\psi]_G)=[S,\sigma]_H$, then $R=_HS$.
\end{lemma}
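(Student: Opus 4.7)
The plan is to show that $R$ and $S$ are forced to be $H$-conjugate by comparing $[K,\phi]_G\cdot [K,\psi]_G$ with its image under $\Theta$ and singling out one distinguished term in the product that lives on the same subgroup $K$. The key observation is that in Equation (2), the double coset of $s=1$ contributes exactly $[K\cap K,\phi\cdot\psi]_G=[K,\phi\psi]_G$. Hence the multiplication coefficient $\mu^{G,(K,\phi)\times(K,\psi)}_{(K,\phi\psi)}$ is strictly positive.

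Next I would set $[U,\omega]_H:=\Theta([K,\phi\psi]_G)$ and invoke \thref{mus} to conclude that $\mu^{H,(R,\rho)\times(S,\sigma)}_{(U,\omega)}\neq 0$ as well. By \thref{BurnsEls}(3), this forces the existence of some $h\in H$ with
$$[U,\omega]_H=[R\cap {^hS},\rho\cdot {^h\sigma}]_H;$$
in particular, $U$ is $H$-conjugate to $R\cap {^hS}$, so after replacing $(U,\omega)$ by an $H$-conjugate we may assume outright that $U=R\cap {^hS}$.

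The proof then closes by an order count. Applying \thref{sgorders} to the three pairs $[K,\phi]_G$, $[K,\psi]_G$, $[K,\phi\psi]_G$ and their images gives $|R|=|S|=|U|=|K|$. Since $U=R\cap {^hS}\leq R$ with $|U|=|R|$, we get $R=R\cap{^hS}\leq {^hS}$, and then $|R|=|S|=|{^hS}|$ upgrades this containment to equality: $R={^hS}$, i.e., $R=_H S$.

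The main obstacle is essentially resolved once one spots the trivial double coset: recognizing that $[K,\phi\psi]_G$ is present as a summand in the product of two subcharacters sharing the subgroup $K$ supplies the canonical witness needed to relate $R$ and $S$. Everything else is bookkeeping built from the preservation of multiplication coefficients (\thref{mus}) and subgroup orders (\thref{sgorders}), both already available.
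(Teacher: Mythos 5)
Your proposal is correct and follows essentially the same route as the paper: both arguments observe that $\mu^{G,(K,\phi)\times(K,\psi)}_{(K,\phi\psi)}\neq 0$ (the trivial double coset), transport this via \thref{mus} to the images, apply \thref{BurnsEls}(3) to realize $\Theta([K,\phi\psi]_G)$ as $[R\cap{^hS},\rho\cdot{^h\sigma}]_H$, and then use the order count from \thref{sgorders} to conclude $R={^hS}$. No gaps.
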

\begin{proof}
It is clear that $0\neq \mu^{G,(K,\phi)\times (K,\psi)}_{(K,\phi\psi)}$. If $\Theta([K,\phi\psi])=[U,\omega]_H$, then $0\neq \mu^{G,(R,\rho)\times (S,\sigma)}_{(U,\omega)}$, hence there are $t,h\in H$ such that $({^h}U,{^h\omega})=(R\cap {^tS}, \rho\cdot {^t\sigma})$. But $|R|=|S|=|U|=|K|$ by \thref{sgorders}, and $R\cap {^tS}\subseteq R$ and $R\cap {^tS}\subseteq {^tS}$, thus $R={^tS}$.
\end{proof}

\begin{prop}\thlabel{FibtoBurn}
A species isomorphism between $\BAG$ and $\BAH$ can be restricted to an isomorphism of mark tables of between $B(G)$ and $B(H)$.
\end{prop}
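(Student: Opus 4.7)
The plan is to show that $\Theta$ carries every basis element of the form $[K,1]_G$ to a basis element of the form $[R,1]_H$. Once this is in hand, since $B(G)$ is the subring of $\BAG$ generated by these ``trivial'' basis elements and since products $[K,1]_G\cdot [L,1]_G$ expand as $\Z$-linear combinations of elements $[K\cap {^sL},1]_G$, the restriction $\Theta|_{B(G)}\colon B(G)\to B(H)$ will automatically be a ring isomorphism sending standard basis to standard basis, which is what an isomorphism of mark tables is.

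The key reduction is: if $\Theta([K,1]_G)=[R,\rho]_H$, then $\rho=1$. \thref{BurnsEls}(4) is tailor-made for this: I will verify that $\mu^{H,(R,\rho)\times(R,\tau)}_{(R,\tau)}\neq 0$ for every $\tau\in Hom(R,A)$. For any $\zeta\in Hom(K,A)$, write $\Theta([K,\zeta]_G)=[S,\sigma]_H$; \thref{1stEntr} gives $S=_H R$, so after $H$-conjugation I may assume $S=R$ and $\sigma\in Hom(R,A)$. Then \thref{BurnsEls}(4) applied in $G$ to $(K,1)$ yields $\mu^{G,(K,1)\times(K,\zeta)}_{(K,\zeta)}\neq 0$, and \thref{mus} transfers this to $\mu^{H,(R,\rho)\times(R,\sigma)}_{(R,\sigma)}\neq 0$. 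This already handles those $\tau$ that arise as second entries of $\Theta$-images.

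The main obstacle is covering every $\tau\in Hom(R,A)$, not only those appearing as some $\sigma$. The assignment $[K,\zeta]_G\mapsto [R,\sigma]_H$ gives a bijection between $G$-orbits of subcharacters with first entry in $[K]_G$ and $H$-orbits of subcharacters with first entry in $[R]_H$: injectivity comes from $\Theta$ being a bijection on standard bases, surjectivity from applying the same argument to the species isomorphism $\Theta^{-1}$. These orbit sets are parametrized respectively by $N_G(K)$-orbits on $Hom(K,A)$ and $N_H(R)$-orbits on $Hom(R,A)$, so every $N_H(R)$-orbit on $Hom(R,A)$ contains some $\sigma$ for which nonvanishing is already known. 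Since \thref{BurnsEls}(1) makes $\mu^{H,(R,\rho)\times(R,\tau)}_{(R,\tau)}$ constant on $N_H(R)$-orbits in $\tau$, the nonvanishing propagates to all $\tau\in Hom(R,A)$, and \thref{BurnsEls}(4) forces $\rho=1$. This finishes the trivialization of second entries, from which the restriction to an isomorphism of mark tables follows as described.
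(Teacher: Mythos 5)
Your proof is correct and follows essentially the same route as the paper: both reduce to showing $\mu^{H,(R,\rho)\times(R,\sigma)}_{(R,\sigma)}\neq 0$ for all $\sigma\in Hom(R,A)$ and invoke Part (4) of \thref{BurnsEls}, with the coverage of every $\sigma$ coming from $\Theta^{-1}$. The paper just pulls back each $[R,\sigma]_H$ directly via $\Theta^{-1}$ and \thref{mus}, whereas you push forward and then argue surjectivity on $N_H(R)$-orbits; the extra orbit-counting is sound but not needed.
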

\begin{proof}
Let $\Theta([K,1]_G)=[R,\rho]_H$. If $\sigma \in Hom(R,A)$, then $\Theta^{-1}([R,\sigma]_H)=[K,\zeta]_G$ for some $\zeta\in Hom(K,\sigma)$. By \thref{mus}, $0\neq \mu^{G,(K,1)\times(K,\zeta)}_{(K,\zeta)}= \mu^{H,(R,\rho)\times(R,\sigma)}_{(R,\sigma)}$ for every $\sigma$, thus $\rho=1$ by Part (4) of \thref{BurnsEls}.
\end{proof}

\begin{lemma}\thlabel{morphs1}
Let $\phi,\psi\in Hom(K,A)$, and let $\Theta([K,\phi]_G)=[R,\rho]_H$ and $\Theta([K,\psi]_G)=[R,\sigma]_H$. Then:
\begin{enumerate}
    \item $\Theta([K,\phi\psi]_G)=[R,\rho(^s\sigma)]_H$ for some $s\in N_H(R)$,
    \item $\Theta([K,\phi^{-1}]_G)=[R,\rho^{-1}]_H$.
\end{enumerate}
\end{lemma}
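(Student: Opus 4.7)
The plan is to derive (1) from Lemma \thref{mus} applied to the product $[K,\phi]_G\cdot [K,\psi]_G$, exactly as in the proof of Lemma \thref{1stEntr}, and then to bootstrap (2) from (1) by setting $\psi=\phi^{-1}$ and invoking Proposition \thref{FibtoBurn}.

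For part (1), note that taking $s=1$ in the multiplication formula shows that $\mu^{G,(K,\phi)\times(K,\psi)}_{(K,\phi\psi)}\ne 0$, because $(K\cap K,\phi\cdot\psi)=(K,\phi\psi)$. Writing $\Theta([K,\phi\psi]_G)=[U,\omega]_H$, Lemma \thref{mus} then gives $\mu^{H,(R,\rho)\times(R,\sigma)}_{(U,\omega)}\ne 0$, so by Part (3) of Lemma \thref{BurnsEls} there is $t\in H$ with $[U,\omega]_H=[R\cap {^tR},\rho\cdot {^t\sigma}]_H$. Lemma \thref{sgorders} gives $|U|=|K|=|R|$, so $U\subseteq_H R\cap {^tR}\subseteq R$ forces $R={^tR}$, i.e., $t\in N_H(R)$; in particular ${^t\sigma}\in Hom(R,A)$. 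Hence we may take the representative $(R,\rho\cdot {^t\sigma})$ of the $H$-orbit and set $s=t$.

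For part (2), apply (1) with $\psi=\phi^{-1}$. By Lemma \thref{1stEntr} we may choose a representative of $\Theta([K,\phi^{-1}]_G)$ of the form $[R,\sigma]_H$. Part (1) then yields $\Theta([K,1]_G)=[R,\rho\cdot {^s\sigma}]_H$ for some $s\in N_H(R)$. On the other hand, Proposition \thref{FibtoBurn} (applied again via Lemma \thref{1stEntr} to pin down the first coordinate as $R$) says $\Theta([K,1]_G)=[R,1]_H$. Comparing the two, there exists $h\in N_H(R)$ with ${^h(\rho\cdot {^s\sigma})}=1$, whence $\rho\cdot {^s\sigma}=1$, i.e.\ ${^s\sigma}=\rho^{-1}$. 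Since $s\in N_H(R)$, conjugating back gives $[R,\sigma]_H=[R,\rho^{-1}]_H$.

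The only real obstacle is the bookkeeping around conjugacy: one has to be careful at each step to argue that the ambient conjugating element actually lies in $N_H(R)$, so that conjugating a homomorphism on $R$ keeps it a homomorphism on $R$. This is handled uniformly by the size argument from Lemma \thref{sgorders}, which forces any subgroup of $R$ with the same order to equal $R$.
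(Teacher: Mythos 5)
Your proof is correct and takes essentially the same route as the paper's: both parts hinge on the nonvanishing of $\mu^{G,(K,\phi)\times(K,\psi)}_{(K,\phi\psi)}$, transported via \thref{mus}, followed by the order argument from \thref{sgorders} to force the conjugator into $N_H(R)$, and part (2) is obtained by specializing $\psi=\phi^{-1}$ and comparing with $\Theta([K,1]_G)=[R,1]_H$ from \thref{FibtoBurn}. Your version just makes the conjugacy bookkeeping slightly more explicit than the paper does.
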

\begin{proof}
If $\Theta([K,\phi\psi]_G)=[R,\tau]_H$, then $0\neq \mu^{H,(R,\rho)\times(R,\sigma)}_{(R,\tau)}$, and so there are elements $s,t\in H$ such that $^tR=R\cap {^sR}$ and $^t\tau=\rho\cdot{^s\sigma}$, but this is implies that $s,t\in N_H(R)$ and $^t\tau=\rho(^s\sigma)$, hence $[R,\tau]_H=[R,{^t\tau}]_H=[R,\rho\cdot{^s\sigma}]_H$, proving Part (1). In particular, if $\psi=\phi^{-1}$, then by \thref{FibtoBurn} we have that $[R,1]_H=\Theta([K,\phi\phi^{-1}]_G)=[R,\rho(^s\sigma)]_H$ for some $s\in N_H(R)$, hence $^s\sigma=\rho^{-1}$, which proves Part (2).
\end{proof}

\begin{cor}\thlabel{gammas}
Let $(K,\phi),(L,\psi)\in \MAG$. If $\Theta([K,\phi]_G)=[R,\rho]_H$ and $\Theta([L,\psi]_G)=[S,\sigma]_H$, then $\gamma^H_{(R,\rho),(S,\sigma)}=\gamma^G_{(K,\phi),(L,\psi)}$.
\end{cor}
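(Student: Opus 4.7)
The plan is to reduce the equality of marks to an equality of multiplication coefficients via part (5) of Lemma \thref{markprops}, which expresses
$$\gamma^G_{(K,\phi),(L,\psi)}=\mu^{G,(K,\phi)\times (L,\psi^{-1})}_{(K,1)}.$$
This reduction is appealing because Lemma \thref{mus} already lets us transport multiplication coefficients across $\Theta$, provided we can identify the images under $\Theta$ of the three subcharacters involved, namely $(K,\phi)$, $(L,\psi^{-1})$, and $(K,1)$.

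First I would pin down $\Theta([K,1]_G)$. By Proposition \thref{FibtoBurn}, $\Theta([K,1]_G) = [R',1]_H$ for some $R'\leq H$; applying Lemma \thref{1stEntr} to the two characters $\phi,1\in Hom(K,A)$ gives $R'=_H R$. Since the multiplication coefficients depend only on the $H$-orbit of their representatives (Lemma \thref{BurnsEls}(1)), I may replace $R'$ by $R$ and assume $\Theta([K,1]_G)=[R,1]_H$. Next I would identify $\Theta([L,\psi^{-1}]_G)$. Lemma \thref{1stEntr} applied to $\psi,\psi^{-1}\in Hom(L,A)$ yields $\Theta([L,\psi^{-1}]_G)=[S,\sigma']_H$ for some $\sigma'\in Hom(S,A)$ (again after choosing the representative in the orbit with first coordinate $S$), and Lemma \thref{morphs1}(2), applied with $L$ in place of $K$, forces $\sigma'=\sigma^{-1}$.

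With these identifications in hand, Lemma \thref{mus} gives
$$\mu^{G,(K,\phi)\times (L,\psi^{-1})}_{(K,1)}=\mu^{H,(R,\rho)\times (S,\sigma^{-1})}_{(R,1)},$$
and applying Lemma \thref{markprops}(5) in reverse to the right hand side rewrites it as $\gamma^H_{(R,\rho),(S,\sigma)}$. Chaining the three equalities gives the desired conclusion. There is no real obstacle here: the whole argument is bookkeeping with orbit representatives, and the only conceptual step is the initial use of Lemma \thref{markprops}(5) to convert marks into multiplication coefficients, after which the preceding lemmas combine mechanically.
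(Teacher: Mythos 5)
Your proposal is correct and follows essentially the same route as the paper: convert the mark to a multiplication coefficient via Lemma \thref{markprops}(5), transport it with Lemma \thref{mus} (using Proposition \thref{FibtoBurn}, Lemma \thref{1stEntr}, and Lemma \thref{morphs1}(2) to identify the images of $(K,1)$ and $(L,\psi^{-1})$), and convert back. The paper merely compresses these identifications into a one-line chain of equalities, so your version is the same argument with the bookkeeping made explicit.
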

\begin{proof}
$\gamma^G_{(K,\phi),(L,\psi)}=\mu_{(K,1)}^{G,(K,\phi)\times (L,\psi^{-1})} =\mu_{(R,1)}^{H,(R,\rho)\times (S,\sigma^{-1})}=\gamma^H_{(R,\rho),(S,\sigma)}$.
\end{proof}

\begin{cor}\thlabel{lattiso}
The map $G\backslash \MAG \longrightarrow H\backslash \MAH, [K,\phi]_G\mapsto \Theta([K,\phi]_G)$ is an isomorphism of posets.
\end{cor}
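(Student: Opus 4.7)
The plan is to reduce the poset structure on $G\backslash\MAG$ to the vanishing of marks, and then apply \thref{gammas} which says that species isomorphisms preserve marks across the bijection induced by $\Theta$ on standard bases.

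First I would observe that the assignment $[K,\phi]_G \mapsto \Theta([K,\phi]_G)$ is well-defined and a bijection: by definition of a species isomorphism, $\Theta$ sends every standard basis element of $\BAG$ to a standard basis element of $\BAH$, and since $\Theta$ is a ring (in particular, abelian group) isomorphism, this assignment is a bijection between $G\backslash\MAG$ and $H\backslash\MAH$.

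Next I would use Part (2) of \thref{markprops}, which characterizes the partial order as $[K,\phi]_G\leq [L,\psi]_G$ if and only if $\gamma^G_{(K,\phi),(L,\psi)}\neq 0$. Writing $\Theta([K,\phi]_G)=[R,\rho]_H$ and $\Theta([L,\psi]_G)=[S,\sigma]_H$, \thref{gammas} yields $\gamma^G_{(K,\phi),(L,\psi)}=\gamma^H_{(R,\rho),(S,\sigma)}$, so one is nonzero precisely when the other is. Applying \thref{markprops} Part (2) once more on the $H$-side, this is equivalent to $[R,\rho]_H\leq [S,\sigma]_H$. Hence the bijection both preserves and reflects the order, i.e., it is an isomorphism of posets.

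There is essentially no obstacle here: all of the work has been done in \thref{gammas}, and the present corollary is a one-line assembly of that identity with the mark criterion for the partial order. The proof can be written in two or three sentences.
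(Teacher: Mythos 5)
Your proposal is correct and follows exactly the paper's argument: the order on orbits is characterized by the nonvanishing of marks (Part (2) of \thref{markprops}), and \thref{gammas} shows these are preserved, so the induced bijection on standard bases is a poset isomorphism. The paper's own proof is the same one-line assembly you describe.
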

\begin{proof}
Recalling that $[K,\phi]_G\leq [L,\psi]_G$ if and only if $0\neq \gamma^G_{(K,\phi),(L,\psi)}$, the result follows by \thref{gammas}.
\end{proof}
\begin{prop}\thlabel{bijBASES}
There is a bijection $\theta:\MAG \longrightarrow \MAH$ such that $\Theta([K,\phi]_G)=[\theta(K,\phi)]_H$. Furthermore, $\theta$ can be chosen to be of the form $\theta(K,\phi)=(\theta_{\mathcal{S}}(K),\theta_K(\phi))$ for some bijections $\theta_{\mathcal{S}}:\SG\longrightarrow \SH$ and $\theta_K:Hom(K,A)\longrightarrow Hom(\theta_{\mathcal{S}}(K),A)$ for $K\leq G$ satisfying the following:
\begin{enumerate}
    \item $\theta_K(1)=1$,
    \item $\forall \phi,\psi\in Hom(K,A)$, $\theta_K(\phi\psi)={^s\theta_K(\phi)}{^t\theta_K(\psi)}$ and $\theta_K(\phi^{-1})={^u\theta_K(\phi)^{-1}}$, for some $s,t,u \in N_H(\theta_{\mathcal{S}}(K))$,
    \item If $g\in G$, then $\theta_{^gK}(^g\phi)={^h}\theta_K(\phi)$ for some $h\in H$ such that $\theta_{\mathcal{S}}(^gK)={^h\theta_{\mathcal{S}}(K)}$. If $g\in N_G(K)$, then $h\in N_H(\theta_S(K))$.
\end{enumerate}
\end{prop}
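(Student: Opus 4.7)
The plan is to lift the orbit-level bijection $\bar\theta : G\backslash\MAG \to H\backslash\MAH$ induced by $\Theta$ to a map $\theta : \MAG \to \MAH$ of the prescribed product form. Two facts drive the construction: \thref{1stEntr} makes the subgroup component of $\Theta([K,\phi]_G)$ independent of $\phi$ up to $H$-conjugacy, and \thref{gammas} combined with orbit-stabilizer will guarantee that corresponding $N_G(K)$- and $N_H(R)$-orbits on character sets have equal cardinality. With these in hand, I would construct $\theta$ in three passes: define $\theta_{\mathcal{S}}$, then $\theta_K$ for $K$ in a chosen transversal of $G$-conjugacy classes of subgroups, and finally extend everything by conjugation.

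Explicitly, fix systems of representatives $[G\backslash\SG]$ and $[H\backslash\SH]$. For $K \in [G\backslash\SG]$, define $\theta_{\mathcal{S}}(K)$ to be the unique $R \in [H\backslash\SH]$ whose class equals the first-component class of $\bar\theta([K,\phi]_G)$ for any $\phi \in Hom(K,A)$: this is well-defined by \thref{1stEntr}, and symmetric application to $\Theta^{-1}$ yields a bijection; also $|R| = |K|$ by \thref{sgorders}. Then for each $K' \in \SG$ fix auxiliary elements $g_{K'} \in G$, $h_{K'} \in H$ with $K' = {^{g_{K'}}K_0}$ for the appropriate $K_0 \in [G\backslash\SG]$, set $\theta_{\mathcal{S}}(K') := {^{h_{K'}}\theta_{\mathcal{S}}(K_0)}$, and take $g_{K_0} = h_{K_0} = 1$. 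Next, for $K \in [G\backslash\SG]$ and $R = \theta_{\mathcal{S}}(K)$, declare $\theta_K(1) := 1$ (justified by \thref{FibtoBurn}); for each nontrivial $N_G(K)$-orbit on $Hom(K,A)$ pick a representative $\phi$, set $\theta_K(\phi) := \rho$ where $\bar\theta([K,\phi]_G) = [R,\rho]_H$, and extend to the orbit via $\theta_K({^n\phi}) := {^{\beta(n)}\rho}$ using any bijection $\beta$ between transversals of $N_G(K)/N_G(K,\phi)$ and $N_H(R)/N_H(R,\rho)$. Equal cardinality of these transversals follows from
\[
[N_G(K,\phi):K] = \gamma^G_{(K,\phi),(K,\phi)} = \gamma^H_{(R,\rho),(R,\rho)} = [N_H(R,\rho):R]
\]
together with $|N_G(K)| = |N_H(R)|$, both consequences of \thref{markprops}(3) and \thref{gammas}. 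Finally, for $K' \notin [G\backslash\SG]$, set $\theta_{K'}({^{g_{K'}}\phi}) := {^{h_{K'}}\theta_{K_0}(\phi)}$.

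That $\theta$ is a bijection and satisfies $\Theta([K,\phi]_G) = [\theta(K,\phi)]_H$ is verified fibre-wise from the construction. Property (1) is built in; property (2) follows from \thref{morphs1}, since $\theta_K(\phi\psi)$ must lie in the $N_H(R)$-orbit of $\rho\cdot{^{s_0}\sigma}$ and therefore equals ${^v(\rho\cdot{^{s_0}\sigma})} = {^v\theta_K(\phi)}\cdot{^{vs_0}\theta_K(\psi)}$ for some $v\in N_H(R)$, and similarly for inverses (then transferred by conjugation for $K \notin [G\backslash\SG]$). For (3), writing $K = {^{g_K}K_0}$ and $\phi = {^{g_K}\phi_0}$, the recursive definition gives $\theta_{^gK}({^g\phi}) = {^{h_{^gK}h_K^{-1}}\theta_K(\phi)}$, with the same exponent relating $\theta_{\mathcal{S}}({^gK})$ and $\theta_{\mathcal{S}}(K)$; when $g \in N_G(K)$, reduction to $\tilde g = g_K^{-1}g g_K \in N_G(K_0)$ combined with the transversal construction for $\theta_{K_0}$ places the required element $\tilde h$ in $N_H(\theta_{\mathcal{S}}(K_0))$, and conjugation by $h_K$ puts $h = h_K\tilde h h_K^{-1}$ in $N_H(\theta_{\mathcal{S}}(K))$. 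The main obstacle I anticipate is organizational rather than structural: arranging the orbit-wise bijections in the middle step so that, once transported by conjugation, they assemble into global maps satisfying the normalizer version of (3). Once orbit sizes match via \thref{gammas}, no further input beyond the preceding lemmas is required.
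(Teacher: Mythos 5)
Your proposal is correct and follows essentially the same route as the paper: it uses \thref{FibtoBurn} and \thref{1stEntr} to get the subgroup bijection $\theta_{\mathcal{S}}$, then matches orbit sizes via \thref{gammas} and \thref{markprops}(3) (i.e.\ $[N_G(K,\phi):K]=[N_H(R,\rho):R]$ together with $|K|=|R|$ from \thref{sgorders}) to choose bijections between corresponding $N_G(K)$- and $N_H(\theta_{\mathcal{S}}(K))$-orbits on characters, with properties (1)--(3) coming from \thref{FibtoBurn} and \thref{morphs1}. You merely spell out the conjugation bookkeeping for the extension off a transversal and the verification of (2) and (3), which the paper leaves as ``clear.''
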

\begin{proof}
Let $(K,\phi)\in \MAG$ and $\Theta([K,\phi]_G)=[R,\rho]_H$ for some $(R,\rho)\in \MAH$. By \thref{gammas}, we have that 
$$[N_G(K,\phi):K]=\gamma^G_{(K,\phi),(K,\phi)}=\gamma^{H}_{(R,\rho),(R,\rho)}=[N_H(R,\rho):R]$$
and since $|K|=|R|$ and $|G|=|H|$, we get that
$$|[K,\phi]_G|=[G:N_G(K,\phi)]=[H:N_H(R,\rho)]=|\Theta([K,\phi]_G)|,$$
thus we can construct $\theta$ by choosing a bijection between $[K,\phi]_G$ and $\Theta([K,\phi]_G)$ for every $[K,\phi]_G\in G\backslash \MAG$.

From \thref{FibtoBurn}, we know that $\Theta$ induces a bijection $\theta_{\mathcal{S}}:\SG\longrightarrow \SH$ preserving conjugacy classes, so we fix one such bijection. If $\phi\in Hom(K,A)$ and $\Theta([K,\phi]_G)=[\theta_{\mathcal{S}}(K),\rho]_H$, then $[N_G(K):N_G(K,\phi)]=[N_H(\theta_{\mathcal{S}}(K)):N_H(\theta_{\mathcal{S}}(K),\rho)]$, and choosing a bijection between $N_G(K)\cdot \phi$ and $N_H(\theta_{\mathcal{S}}(K))\cdot \rho$ for every $\phi \in [N_G(K)\backslash Hom(K,A)]$, we get a bijection $\theta_K:Hom(K,A) \longrightarrow Hom(\theta_{\mathcal{S}}(K),A)$. The last assertions are clear.
\end{proof}

By Parts (2) and (3) of \thref{bijBASES}, some of the maps $\theta_K$ are necessarily group isomorphisms.

\begin{cor}
If $N_G(K)=KC_G(K)$ or $N_H(\theta_{\mathcal{S}}(K))=\theta_{\mathcal{S}}(K)C_H(\theta_{\mathcal{S}}(K))$, or if $K$ is self-normalized, then $\theta_K$ is a group isomorphism. In particular, $\theta_G$ and $\theta_{Z(G)}$ are always group isomorphisms.
\end{cor}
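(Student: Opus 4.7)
The plan is to show that, under any of the three hypotheses, condition (2) of \thref{bijBASES} collapses to the honest identities $\theta_K(\phi\psi) = \theta_K(\phi)\theta_K(\psi)$ and $\theta_K(\phi^{-1}) = \theta_K(\phi)^{-1}$, after which the bijectivity of $\theta_K$ forces it to be a group isomorphism. The common core of all three cases is the assertion that $N_H(\theta_{\mathcal{S}}(K))$ acts trivially on $Hom(\theta_{\mathcal{S}}(K), A)$, which annihilates the conjugating elements $s,t,u \in N_H(\theta_{\mathcal{S}}(K))$ appearing in part (2).

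The cleanest case is the hypothesis $N_H(\theta_{\mathcal{S}}(K)) = \theta_{\mathcal{S}}(K)\, C_H(\theta_{\mathcal{S}}(K))$: writing any element as $rc$ with $r \in \theta_{\mathcal{S}}(K)$ and $c \in C_H(\theta_{\mathcal{S}}(K))$, the conjugate ${^{rc}\alpha}$ of $\alpha \in Hom(\theta_{\mathcal{S}}(K), A)$ reduces to ${^r\alpha}$, and ${^r\alpha} = \alpha$ because $A$ is abelian. The self-normalized case $N_G(K) = K$ is essentially a subcase: applying \thref{gammas} with $\phi = \psi = 1$ together with \thref{sgorders} yields $[N_G(K):K] = [N_H(\theta_{\mathcal{S}}(K)):\theta_{\mathcal{S}}(K)]$, so $N_H(\theta_{\mathcal{S}}(K)) = \theta_{\mathcal{S}}(K)$, and the abelianness of $A$ again delivers the required triviality.

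For the hypothesis $N_G(K) = K\, C_G(K)$ I would transfer the triviality across $\theta_K$. The same abelianness argument first shows $N_G(K, \phi) = N_G(K)$ for every $\phi \in Hom(K, A)$. Then \thref{gammas} applied to $\gamma^G_{(K,\phi),(K,\phi)} = [N_G(K,\phi):K]$, combined with \thref{sgorders} and the index equality from the previous paragraph, gives $N_H(\theta_{\mathcal{S}}(K), \theta_K(\phi)) = N_H(\theta_{\mathcal{S}}(K))$ for every $\phi$; surjectivity of $\theta_K$ upgrades this to triviality on all of $Hom(\theta_{\mathcal{S}}(K), A)$. Finally, the ``in particular'' clause falls under this first hypothesis: $N_G(G) = G\, C_G(G)$ trivially, and $N_G(Z(G)) = G = Z(G)\, C_G(Z(G))$ since $C_G(Z(G)) = G$. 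The only mildly delicate step is this last index-transfer argument, but it is a direct consequence of the mark preservation of \thref{gammas}.
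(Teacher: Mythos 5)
Your proposal is correct and follows the route the paper intends (the paper states this corollary without proof, only remarking that it follows from Parts (2) and (3) of \thref{bijBASES}): in each case you show $N_H(\theta_{\mathcal{S}}(K))$ acts trivially on $Hom(\theta_{\mathcal{S}}(K),A)$, so the conjugating elements in Part (2) disappear and the bijection $\theta_K$ becomes multiplicative. The index-transfer step for the case $N_G(K)=KC_G(K)$ is handled correctly via \thref{gammas} and \thref{markprops}(3) together with surjectivity of $\theta_K$, and the reduction of the self-normalized case and of the ``in particular'' clause to the other hypotheses is sound.
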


If we have any bijection $\theta:\MAG\longrightarrow \MAH$ preserving the marks, then we still have an isomorphism of posets between $G\backslash \MAG$ and $H\backslash \MAH$ and $|G|=|H|$, and if these sets are finite, they can be given total orders such that
\begin{align}
    \left[\gamma^G_{(K,\phi),(L,\psi)}\right]_{[K,\phi]_G, [L,\psi]_G\in G\backslash\MAG}=\left[\gamma^H_{(R,\rho),(S,\sigma)}\right]_{[R,\rho]_H, [S,\sigma]_H\in H\backslash\MAH}
\end{align}
as $|G\backslash\MAG|\times |G\backslash\MAG|$-matrices. When $A$ has trivial $|G|$-torsion, $\theta$ induces an isomorphism of mark tables and $B(G)$ and $B(H)$ can be embeded as the same subring in a product of copies of $\Z$ \cite[Section 2]{RagVal1}. For more general $A$, we have no direct way to identify $\widetilde{B^A}(G)$ and $\widetilde{B^A}(H)$ so we could compare the images of the mark morphisms. However, when $\BAG$ and $\BAH$ are species-isomorphic, we can prove that their ghost rings are isomorphic too in a way that the images of the mark morphisms agree.

\begin{theorem}
If $\Theta:\BAG\longrightarrow \BAH$ is a species isomorphism, then there is a unique ring isomorphism $\widetilde{\Theta}:\widetilde{B^A}(G)\longrightarrow\widetilde{B^A}(H)$ such that the diagram
\begin{align}
    \xymatrix{
    \BAG\ar[rr]^{\Theta}\ar[d]_-{\Phi^A_G} &&\BAH\ar[d]^{\Phi^A_H}\\
    \widetilde{B^A}(G)\ar[rr]_-{\widetilde{\Theta}} &&\widetilde{B^A}(H)
}
\end{align}
commutes.
\end{theorem}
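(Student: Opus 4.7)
The strategy is to define $\widetilde{\Theta}$ directly on the ghost basis via the bijection of orbits induced by $\Theta$, verify commutativity of the diagram by a marks computation, and promote $\widetilde{\Theta}$ to a ring isomorphism by tensoring with $\Q$.

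First, I would establish the expansion
\begin{equation*}
\Phi^A_G([L,\psi]_G) = \sum_{[K,\phi]_G\in G\backslash\MAG}\gamma^G_{(K,\phi),(L,\psi)}\,\widetilde{\phi}
\end{equation*}
of the mark morphism in the ghost basis $\{\widetilde{\phi}\}$ of $\gBAG$. This follows by comparing equations (5) and (6): at the $K$-th coordinate, $\Phi^A_G([L,\psi]_G)$ equals $\sum_{\phi\in Hom(K,A)}\gamma^G_{(K,\phi),(L,\psi)}\phi$, and since $\gamma^G_{(-,-),(L,\psi)}$ is constant on $N_G(K)$-orbits of the first argument by Part (1) of \thref{markprops}, this regroups into $N_G(K)$-orbit sums, which are precisely the $K$-th entries of the basis elements $\widetilde{\phi}$. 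With this expansion in hand, I would use the bijection $\theta:\MAG\to\MAH$ of \thref{bijBASES} to define $\widetilde{\Theta}(\widetilde{\phi}):=\widetilde{\theta(K,\phi)}$ on basis elements and extend $\Z$-linearly; this is manifestly a $\Z$-module isomorphism $\gBAG\to\gBAH$. Applying $\widetilde{\Theta}$ to the expansion above and invoking Corollary \thref{gammas} to identify $\gamma^G_{(K,\phi),(L,\psi)}$ with $\gamma^H_{\theta(K,\phi),\theta(L,\psi)}$, the resulting sum is exactly the expansion of $\Phi^A_H(\Theta([L,\psi]_G))$ in the ghost basis of $\gBAH$, giving commutativity of the diagram on basis elements of $\BAG$ and hence everywhere.

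To upgrade $\widetilde{\Theta}$ from a module isomorphism to a ring isomorphism, I would use that $\BAG$ and $\gBAG$ have the same $\Z$-rank and $\Phi^A_G$ is injective, so $\Phi^A_G\otimes\Q$ is a $\Q$-algebra isomorphism, and similarly for $H$. Thus $(\Phi^A_H\otimes\Q)\circ(\Theta\otimes\Q)\circ(\Phi^A_G\otimes\Q)^{-1}$ is a $\Q$-algebra isomorphism $\gBAG\otimes\Q\to\gBAH\otimes\Q$, and the commutativity already established forces it to coincide with $\widetilde{\Theta}\otimes\Q$; since $\gBAG\hookrightarrow\gBAG\otimes\Q$ and $\widetilde{\Theta}$ is a $\Z$-linear bijection onto $\gBAH$, this implies $\widetilde{\Theta}$ itself is multiplicative. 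Uniqueness is immediate: any $\widetilde{\Theta}'$ making the diagram commute must agree with $\widetilde{\Theta}$ on the full-rank sublattice $\Phi^A_G(\BAG)\subseteq\gBAG$, hence agree after tensoring with $\Q$, and hence on $\gBAG$. I expect the main technical hurdle to be the ghost-basis expansion formula in the first step, since one must carefully track how contributions from each orbit $[K,\phi]_G$ are distributed across conjugate subgroups; once that formula and Corollary \thref{gammas} are in place, everything else reduces to bookkeeping combined with the standard rational-promotion trick.
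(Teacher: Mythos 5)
Your proposal is correct, and for the key step it takes a genuinely different route from the paper. The skeleton is the same: both define $\widetilde{\Theta}$ on the ghost basis by $\widetilde{\phi}\mapsto\widetilde{\theta_K(\phi)}$ using the bijection $\theta$ of \thref{bijBASES}, both verify commutativity of the diagram by expanding $\Phi^A_G([L,\psi]_G)$ in the basis $\{\widetilde{\phi}\}$ with coefficients $\gamma^G_{(K,\phi),(L,\psi)}$ and invoking \thref{gammas}, and both prove uniqueness by inverting $\Q\otimes\Phi^A_G$. The difference is in how multiplicativity of $\widetilde{\Theta}$ is obtained. The paper proves it \emph{before} commutativity, by a direct coefficient computation: since multiplication in the ghost ring is entrywise, it reduces to products $\widetilde{\phi}\widetilde{\psi}$ with $\phi,\psi\in Hom(K,A)$ for a single $K$, and then compares $\textit{coeff}_{\widetilde{\tau}}(\widetilde{\phi}\widetilde{\psi})=\alpha^{G,(K,\phi)\times(K,\psi)}_{(K,\tau)}/\bigl(\gamma^G_{(K,\phi),(K,\phi)}\gamma^G_{(K,\psi),(K,\psi)}\bigr)$ on both sides using the invariance of the $\mu$'s (\thref{mus}), the $\gamma$'s (\thref{gammas}), and the vanishing of the $\beta$'s, via Equation (11). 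You instead establish commutativity first and then transport the ring structure: $\widetilde{\Theta}\otimes\Q=(\Q\otimes\Phi^A_H)\circ(\Q\otimes\Theta)\circ(\Q\otimes\Phi^A_G)^{-1}$ is a composite of $\Q$-algebra isomorphisms, and $\widetilde{\Theta}$ is its restriction to $\gBAG$, hence multiplicative (unitality also drops out of commutativity, since $(1)_{K\leq G}=\Phi^A_G([G,1]_G)$ and $\Theta$ is unital). Your argument is shorter and cleaner given that a ring isomorphism $\Theta$ is already in hand; what the paper's heavier computation buys is reusability --- essentially the same $\alpha$/$\beta$ manipulation is what drives the implication (3)$\Longrightarrow$(4) of \thref{MarkIso}, where no ring isomorphism $\Theta$ is available a priori and multiplicativity of $\widetilde{\Theta}$ must be extracted from the coefficient identities alone.
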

\begin{proof}
Let $\theta_{\mathcal{S}}:\SG\longrightarrow \SH$ and $\theta_K:Hom(K,A)\longrightarrow Hom(\theta_{\mathcal{S}}(K),A)$ be as in \thref{bijBASES}, and set $\theta(K,\phi)=(\theta_{\mathcal{S}}(K),\theta_K(\phi))$. Then $\widetilde{\Theta}:\widetilde{B^A}(G)\longrightarrow \widetilde{B^A}(H),\;\widetilde{\phi}\mapsto \widetilde{\theta_K(\phi)}$ is an isomorphism of abelian groups. Since multiplication in $\widetilde{B^A}(G)$ is given entry-to-entry, it is enough to verify that $\widetilde{\Theta}$ respects multiplication for elements $\widetilde{\phi}$ and $\widetilde{\psi}$ with $\phi,\psi\in Hom(K,A)$. Noting that the $K$-th entry of $\Phi^A_G([K,\phi]_G)$ equals $\gamma^G_{(K,\phi),(K,\phi)}\widetilde{\phi}_K$, then for $\tau\in Hom(K,\psi)$, we have that
\begin{align}
    \textit{coeff}_{\widetilde{\tau}}\left(\widetilde{\phi}\widetilde{\psi}\right)=\frac{\alpha^{G,(K,\phi)\times (K,\psi)}_{(K,\tau)}}{\gamma^G_{(K,\phi),(K,\phi)}\gamma^G_{(K,\psi),(K,\psi)}},
\end{align}
and since $0=\beta^{G,(K,\phi)\times (K,\psi)}_{(K,\tau)}=\beta^{H,\theta(K,\phi)\times \theta(K,\psi)}_{\theta(K,\tau)}$ and $\mu^{G,(K,\phi)\times (K,\psi)}_{(K,\tau)}=\mu^{H,\theta(K,\phi)\times \theta(K,\psi)}_{\theta(K,\tau)}$, then by Equation (11), we get
\begin{align*}
    \textit{coeff}_{\widetilde{\rho}}\left(\widetilde{\theta_K(\phi)}\widetilde{\theta_K(\psi)}\right)&=\frac{\alpha^{H,\theta(K,\phi)\times \theta(K,\psi)}_{(\theta_{\mathcal{S}}(K),\rho)}}{\gamma^H_{\theta(K,\phi),\theta(K,\phi)}\gamma^H_{\theta(K,\psi),\theta(K,\psi)}}\\
    &=\frac{\alpha^{G,(K,\phi)\times (K,\psi)}_{(K,\tau)}}{\gamma^G_{(K,\phi),(K,\phi)}\gamma^G_{(K,\psi),(K,\psi)}}=\textit{coeff}_{\widetilde{\tau}}\left(\widetilde{\phi}\widetilde{\psi}\right),
\end{align*}
if $\rho=\theta_K(\tau)$. Hence, $\widetilde{\Theta}\left(\widetilde{\phi}\widetilde{\psi}\right)=\widetilde{\Theta}\left(\widetilde{\phi}\right)\widetilde{\Theta}\left(\widetilde{\psi}\right)$ for all $\phi\in Hom(K,A)$, $\psi\in Hom(L,A)$. For every $(K,\phi)\in \MAG$ and $R=\theta_{\mathcal{S}}(T)\leq H$, we have that
\begin{align*}
    \left(\widetilde{\Theta}\Phi^A_G\left([K,\phi]_G\right)\right)_R &=\sum_{
    \tau\in [N_G(T)\backslash Hom(T,A)]}\gamma^G_{(T,\tau),(K,\phi)}\widetilde{\Theta}\left(\widetilde{\lambda}\right)_R\\
    &=\sum_{
    \tau\in [N_G(T)\backslash Hom(T,A)]}\gamma^H_{\theta(T,\tau),\theta(K,\phi)}\widetilde{\theta_T(\lambda)}_R\\
    &=\sum_{
    \rho\in [N_H(R)\backslash Hom(R,A)]}\gamma^H_{(R,\rho),\theta(K,\phi)}\widetilde{\rho}_R =\left(\Phi^A_H\Theta([K,\phi]_G)\right)_R,
\end{align*}
thus $\widetilde{\Theta}\Phi^A_G=\Phi^A_H\Theta$. Finally, if $\widetilde{\Theta}':\widetilde{B^A}(G)\longrightarrow \widetilde{B^A}(H)$ is another isomorphism making Diagram (13) commute, then tensoring by $\Q$ we get that 
$$\Q \otimes \widetilde{\Theta}'= \left(\Q\otimes\Phi^A_H \right) \circ \left(\Q\otimes\Theta \right)\circ \left(\Q\otimes\Phi^A_G \right)^{-1}=\Q \otimes \widetilde{\Theta},$$
hence $\widetilde{\Theta}'=\widetilde{\Theta}$.
\end{proof}

There is a partial converse to this result in the following theorem, which states three characterizations of species isomorphisms.

\begin{theorem}\thlabel{MarkIso}
Let $A$ be an abelian group and $G$ and $H$ be finite groups. The following statements are equivalent:
\begin{enumerate}
    \item There is a species isomorphism from $\BAG$ to $\BAH$,
    \item There is a bijection $\theta:\MAG\longrightarrow \MAH$ satisfying the following conditions:
    \begin{enumerate}
    \item $\gamma^{H}_{\theta(K,\phi),\theta(L,\psi)}=\gamma^{G}_{(K,\phi),(L,\psi)}$ for all $(K,\phi),(L,\psi)\in \MAG$,
    \item $\mu^{H,\theta(K,\phi)\times \theta(L,\psi)}_{\theta(T,\tau)}=\mu^{G,(K,\phi)\times (L,\psi)}_{(T,\tau)}$ for all $(K,\phi),(L,\psi),(T,\tau)\in \MAG$,
    \end{enumerate}
    \item There are bijections $\theta_{\mathcal{S}}:\SG\longrightarrow \SH$ and     $\theta_K:Hom(K,A)\longrightarrow Hom(\theta_{\mathcal{S}}(K),A)$ for $K\leq G$ such that
    $$\mu^{H,(\theta_{\mathcal{S}}(K),\theta_K(\phi))\times (\theta_{\mathcal{S}}(L),\theta_L(\psi))}_{(\theta_{\mathcal{S}}(T),\theta_T(\tau))}=\mu^{G,(K,\phi)\times (L,\psi)}_{(T,\tau)}$$ 
    for all $(K,\phi),(L,\psi),(T,\tau)\in \MAG$,
    \item There are bijections $\theta_{\mathcal{S}}:\SG\longrightarrow \SH$ and $\theta_K:Hom(K,A)\longrightarrow Hom(\theta_{\mathcal{S}}(K),A)$ for $K\leq G$ such that:
    \begin{enumerate}
    \item  $\gamma^{H}_{(\theta_{\mathcal{S}}(K),\theta_K(\phi)),(\theta_{\mathcal{S}}(L),\theta_L(\psi))}=\gamma^{G}_{(K,\phi),(L,\psi)}$ for all $(K,\phi),(L,\psi)\in \MAG$,
    \item The map $\widetilde{\Theta}:\widetilde{B^A}(G)\longrightarrow \widetilde{B^A}(H)$ given by $\widetilde{\phi}\mapsto \widetilde{\theta_K(\phi)}$ for $\phi\in Hom(K,A)$ is a ring isomorphism.
    \end{enumerate}
\end{enumerate}
\end{theorem}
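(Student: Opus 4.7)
The plan is to use (1) as a hub and prove $(1)\Leftrightarrow(n)$ for $n=2,3,4$ separately. The forward implications out of (1) are essentially assemblies of earlier results: \thref{bijBASES} provides the bijections $\theta_{\mathcal{S}}$ and $\theta_K$; mark preservation for (2)(a) and (4)(a) comes from \thref{gammas}; multiplication-coefficient preservation for (2)(b) and (3) comes from \thref{mus}; and the ring isomorphism $\widetilde{\Theta}$ in (4)(b) is furnished by the preceding ghost ring theorem.

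The reverse directions $(2),(3)\Rightarrow(1)$ share a common device. In either case I assemble a bijection $\theta:\MAG\to\MAH$ (taking $\theta(K,\phi)=(\theta_{\mathcal{S}}(K),\theta_K(\phi))$ in case (3)) that preserves all multiplication coefficients. The central observation is a uniqueness principle for subcharacters: if $(R,\rho),(R',\rho')\in \MAH$ satisfy $\mu^{H,(R,\rho)\times(S,\sigma)}_{(U,\omega)}=\mu^{H,(R',\rho')\times(S,\sigma)}_{(U,\omega)}$ for every $(S,\sigma),(U,\omega)\in\MAH$, then $[R,\rho]_H=[R',\rho']_H$, because the hypothesis forces $[R,\rho]_H\cdot[S,\sigma]_H=[R',\rho']_H\cdot[S,\sigma]_H$ for all $(S,\sigma)$, and specializing $(S,\sigma)=(H,1)$ yields the equality. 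Combined with the $G$-invariance of the multiplication coefficients from \thref{BurnsEls}(1), this forces $\theta$ to send each $G$-orbit into a single $H$-orbit; applying the same argument to $\theta^{-1}$ and comparing cardinalities shows that $\theta$ restricts to a bijection between each $G$-orbit and its image $H$-orbit. The induced $\Z$-linear map $\Theta:\BAG\to\BAH$, $[K,\phi]_G\mapsto[\theta(K,\phi)]_H$, is then a ring isomorphism by direct translation of the preserved $\mu$'s and a species isomorphism by construction.

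For $(4)\Rightarrow(1)$, the well-definedness of $\widetilde{\Theta}$ on the basis $\{\widetilde{\phi}\}$, which is indexed by $G$-orbits, already forces $(K,\phi)\mapsto (\theta_{\mathcal{S}}(K),\theta_K(\phi))$ to descend to a bijection $\bar\theta:G\backslash\MAG\to H\backslash\MAH$, and I define $\Theta:\BAG\to\BAH$ on the standard basis by $\bar\theta$. Using the expansion $\Phi^A_G([L,\psi]_G)=\sum_{[K,\phi]_G}\gamma^G_{(K,\phi),(L,\psi)}\widetilde{\phi}$ together with mark preservation (4)(a), the square $\widetilde{\Theta}\circ\Phi^A_G=\Phi^A_H\circ\Theta$ commutes. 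Since $\widetilde{\Theta}$ is a ring homomorphism and $\Phi^A_H$ is an injective ring homomorphism, $\Theta$ is automatically a ring homomorphism, hence a species isomorphism.

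The main obstacle is the uniqueness principle underlying the $(2),(3)\Rightarrow(1)$ step: it is what bridges element-level bijections of subcharacters with orbit-level ring isomorphisms, letting the multiplication data alone detect $H$-conjugacy. Once this is secured, the remaining verifications reduce to combining earlier lemmas and tracking how $G$-invariance of $\mu$'s propagates through $\theta$, and the $(4)\Rightarrow(1)$ direction is just the standard mark-morphism diagram chase with injectivity of $\Phi^A_H$ upgrading the linear map $\Theta$ to a ring map.
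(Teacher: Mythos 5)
Your proposal is correct, but it is organized quite differently from the paper, which proves the single cycle $(1)\Rightarrow(3)\Rightarrow(4)\Rightarrow(2)\Rightarrow(1)$ rather than treating $(1)$ as a hub. The forward implications you describe match the paper's $(1)\Rightarrow(3)$ (assembled from \thref{bijBASES}, \thref{mus}, \thref{gammas} and the ghost-ring theorem). The real divergence is in the return trips. The paper gets from $(3)$ back to $(1)$ by first passing to the ghost ring ($(3)\Rightarrow(4)$) and then running a reversed induction on $|T|$ through the identity $\mu = \frac{|U|}{|N_G(U,\omega)|}(\alpha-\beta)$ to recover all multiplication coefficients ($(4)\Rightarrow(2)$), before finally translating bases. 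You instead go directly: your ``uniqueness principle'' (that agreement of all products against $[H,1]_H$ pins down the $H$-orbit) combined with \thref{BurnsEls}(1) shows any $\mu$-preserving bijection $\MAG\to\MAH$ descends to orbits, after which $(2)\Rightarrow(1)$ and $(3)\Rightarrow(1)$ are immediate translations; and your $(4)\Rightarrow(1)$ replaces the induction entirely by the commuting square plus injectivity of $\Phi^A_H$, which upgrades the basis bijection to a ring map for free. Your route is shorter and makes explicit an orbit-coherence step that the paper's $(2)\Rightarrow(1)$ glosses over (the paper never verifies that its $\theta$ descends to orbits, though this does follow from condition (a) via \thref{markprops}(2)); what the paper's longer route buys is the intermediate fact, of independent interest, that the ghost-ring data of $(4)$ already determines all multiplication coefficients, i.e.\ implies $(2)$(b) directly rather than only after composing $(4)\Rightarrow(1)\Rightarrow(2)$. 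Both arguments are complete and correct.
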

\begin{proof}
(1) $\Longrightarrow$ (3): If $\Theta:\BAG\longrightarrow \BAH$ is a species isomorphism, then we can take bijections $\theta_{\mathcal{S}}:\SG\longrightarrow \SH$ and $\theta_K:Hom(K,A)\longrightarrow Hom(\theta_{\mathcal{S}}(K),A)$ for $K\leq G$ as in \thref{bijBASES}, which satisfy the condition by \thref{mus}.

(3) $\Longrightarrow$ (4): Note that $0\neq \mu^{G,(K,1)\times (K,\theta_K^{-1}(\zeta))}_{(K,\theta_K^{-1}(\zeta))}=\mu^{H,(\theta_{\mathcal{S}}(K),\theta_K(1))\times (\theta_{\mathcal{S}}(K),\zeta)}_{(\theta_{\mathcal{S}}(K),\zeta)}$ for every $\zeta\in Hom(\theta_{\mathcal{S}}(K),A)$, hence $\theta_K(1)=1$ by \thref{BurnsEls}. Write $\theta(K,\phi)=(\theta_{\mathcal{S}}(K),\theta_K(\phi))$ for simplicity. For $\phi,\psi\in Hom(K,A)$, we have that $0\neq \mu^{G,(K,\phi)\times (K,\psi)}_{(K,\phi\psi)}=\mu^{H,\theta(K,\phi)\times \theta(K,\psi)}_{\theta(K,\phi\psi)}$, thus there are $s,t\in N_H(\theta_{\mathcal{S}}(K))$ such that $\theta_K(\phi\psi)={^s\theta_K(\phi)}{^t\theta_K(\psi)}$. In particular, $\theta_K(\phi^{-1})={^t\theta_K(\phi)}^{-1}$ for some $t\in N_H(\theta_{\mathcal{S}}(K))$. Therefore,
\begin{align*}
    \gamma^{G}_{(K,\phi),(L,\psi)}&=\mu^{G,(K,\phi)\times (L,\psi^{-1})}_{(K,1)}\\
    &=\mu^{H,(\theta_{\mathcal{S}}(K),\theta_K(\phi))\times (\theta_{\mathcal{S}}(L),\theta_L(\psi^{-1}))}_{(\theta_{\mathcal{S}}(K),\theta_K(1))}\\
    &=\mu^{H,(\theta_{\mathcal{S}}(K),\theta_K(\phi))\times (\theta_{\mathcal{S}}(L),\theta_L(\psi)^{-1})}_{(\theta_{\mathcal{S}}(K),1)}\\
    &=\gamma^{H}_{\theta(K,\phi),\theta(L,\psi)}
\end{align*}
for all $(K,\phi),(L,\psi)\in \MAG$. It follows that the maps $\theta_K$ preserve orbits, and the map $\widetilde{\Theta}:\gBAG \longrightarrow \gBAH$, given by $\widetilde{\phi}\mapsto \widetilde{\theta_K(\phi)}$ for $\phi\in Hom(K,A)$, $K\leq G$, is a group isomorphism that sends $(1)_{K\leq G}$ to $(1)_{R\leq H}$. Since $0=\beta^{G,(K,\phi)\times (K,\psi)}_{(K,\lambda)}=\beta^{H,\theta(K,\phi)\times \theta(K,\psi)}_{\theta(K,\lambda)}$, and by Equations (11) and (14),
\begin{align*}
    \textit{coeff}_{\widetilde{\tau}}\left(\widetilde{\phi}\widetilde{\psi}\right)&=\frac{\alpha^{G,(K,\phi)\times (K,\psi)}_{(K,\tau)}}{\gamma^G_{(K,\phi),(K,\phi)}\gamma^G_{(K,\psi),(K,\psi)}}\\
    &=\frac{\alpha^{H,\theta(K,\phi)\times \theta(K,\psi)}_{\theta(K,\tau)}}{\gamma^H_{\theta(K,\phi),\theta(K,\phi)}\gamma^H_{\theta(K,\psi),\theta(K,\psi)}}\\
    &=\textit{coeff}_{\widetilde{\theta_K(\tau)}}\left(\widetilde{\theta_K(\phi)}\widetilde{\theta_K(\psi)}\right)
\end{align*}
for all $\phi,\psi, \tau \in Hom(K,A)$, which proves that $\widetilde{\Theta}$ is multiplicative.

(4) $\Longrightarrow$ (2): Let $\theta(K,\phi)= (\theta_{\mathcal{S}}(K),\theta_K(\phi))$. By Condition (a), it is easy to see that $\widetilde{\Theta}\left(\Phi^A_G([K,\phi]_G)\right)=\Phi^A_H\left([\theta(K,\phi)]_H\right)$. Since $\widetilde{\Theta}$ is a ring isomorphism, we have that
\begin{align*}
    \alpha^{G,(K,\phi)\times (L,\psi)}_{(T,\tau)}&=   \textit{coeff}_{\widetilde{\tau}}\left(\Phi^A_G([K,\phi]_G)\Phi^A_G([L,\psi]_G)\right)\\
    &=\textit{coeff}_{\widetilde{\Theta}\left(\widetilde{\tau}\right)}\left(\Phi^A_H([\theta(K,\phi)]_H)\Phi^A_H([\theta(L,\psi)]_H)\right)=\alpha^{H,\theta(K,\phi)\times \theta(L,\psi)}_{\theta(T,\tau)}
\end{align*}
for all $(K,\phi),(L,\psi),(T,\tau)\in \MAG$. Recalling Equation (11), we now prove by reversed induction on $|T|$ that $\mu^{G,(K,\phi)\times (L,\psi)}_{(T,\tau)}=\mu^{H,\theta(K,\phi)\times \theta(L,\psi)}_{\theta(T,\tau)}$ for all $(K,\phi),(L,\psi),(T,\tau)\in \MAG$. If $|T|=|G|$, then $T=G$, $\theta_S(G)=H$, and $0=\beta^{G,(K,\phi)\times (L,\psi)}_{(G,\tau)}=\beta^{H,\theta(K,\phi)\times \theta(L,\psi)}_{(H,\theta_G(\tau))}$, hence
$$\mu^{G,(K,\phi)\times (K,\psi)}_{(G,\tau)}=\alpha^{G,(K,\phi)\times (L,\psi)}_{(G,\tau)}=\alpha^{H,\theta(K,\phi)\times \theta(L,\psi)}_{(H,\theta_G(\tau))}=\mu^{H,\theta(K,\phi)\times \theta(L,\psi)}_{(H,\theta_G(\tau))}.$$
Assume now that $|T|<|G|$ and $\mu^{G,(K,\phi)\times (L,\psi)}_{(U,\omega)}=\mu^{H,\theta(K,\phi)\times \theta(L,\psi)}_{\theta(U,\omega)}$ for all $(U,\omega)$ such that $|U|>|T|$. By Condition (a), $[U,\omega]_G\mapsto [\theta(U,\omega)]_H$ is an order-preserving bijection, and by the induction hypothesis and Condition (a), $\beta^{G,(K,\phi)\times (L,\psi)}_{(T,\tau)}=\beta^{H,\theta(K,\phi)\times \theta(L,\psi)}_{\theta(T,\tau)}$. Hence,
\begin{align*}
    \mu^{G,(K,\phi)\times(L,\psi)}_{(T,\tau)} &=\frac{1}{\gamma^G_{(T,\tau),(T,\tau)}}\left(\alpha^{G,(K,\phi)\times (L,\psi)}_{(T,\tau)}- \beta^{G,(K,\phi)\times (L,\psi)}_{(T,\tau)}\right)\\
    &=\frac{1}{\gamma^H_{\theta(T,\tau),\theta(T,\tau)}}\left(\alpha^{H,\theta(K,\phi)\times \theta(L,\psi)}_{\theta(T,\tau)}- \beta^{H,\theta(K,\phi)\times \theta(L,\psi)}_{\theta(T,\tau)}\right)=\mu^{H,\theta(K,\phi)\times \theta(L,\psi)}_{\theta(T,\tau)}.
\end{align*}

(2) $\Longrightarrow$ (1): Setting $\Theta([K,\phi]_G)=[\theta(K,\phi)]_G$ for $(K,\phi)\in \MAG$ defines an isomorphism of abelian groups $\Theta:\BAG\longrightarrow \BAH$. It is a ring isomorphism since, for all $(K,\phi),(L,\psi)\in \MAG$, we have that
\begin{align*}
    \Theta\left([K,\phi]_G\cdot [L,\psi]_G\right)&=\sum_{[T,\tau]_G\in G\backslash \MAG}\mu^{G,(K,\phi)\times(L,\psi)}_{(T,\tau)}\cdot [\theta(T,\tau)]_H\\
    &=\sum_{[T,\tau]_G\in G\backslash \MAG}\mu^{H,\theta(K,\phi)\times\theta(L,\psi)}_{\theta(T,\tau)}\cdot [\theta(T,\tau)]_H\\
    &=\sum_{[U,\omega]_H\in H\backslash \MAH}\mu^{H,\theta(K,\phi)\times\theta(L,\psi)}_{(U,\omega)}\cdot [(U,\omega)]_H\\
    &=\Theta([K,\phi]_G)\cdot \Theta([L,\psi]_G),
\end{align*}
and $\Theta([G,1]_G)=[H,1]_H$ because this is the only element acting as the identity on every element of $H\backslash\MAH$.
\end{proof}

\subsection*{Acknowledgments}

This work was developed during a postdoctoral fellowship at the PCCM UNAM-UMSNH, funded by a CONACyT EPM grant. I want to thank Professor Alberto Gerardo Raggi-Cárdenas, who suggested the study of isomorphisms preserving the standard bases as an adequate approach to the isomorphism problem of fibered Burnside rings. Special thanks to Professor Robert Boltje for introducing me to the study of monomial representation rings and their mark morphisms.


\begin{thebibliography}{9}

\bibitem{Bolt1} R. Boltje.\ {\em A canonical Brauer Induction formula}. Astérisque, 181-182:31-59, 1990.

\bibitem{Bolt2} R. Boltje.\ {\em A general theory of Canonical Induction Formulae.} J. Algebra, 206:293-343, 1998.

\bibitem{Dress} A. Dress.\ {\em The ring of monomial representations I. Structure theory}. J. Algebra, 18:137-157, 1971.

\bibitem{KimRog} W. Kimmerle, K. W. Roggenkamp. \ {\em Non-isomorphic groups with isomorphic spectral tables and Burnside matrices.} Chin. Ann. Math. Ser. B, 15:273–282, 1994.

\bibitem{Mull1} M. Müller.\ {\em Isomorphic rings of monomial representations}. J. Algebra, 367:105-119, 2012.

\bibitem{Mull2} M. Müller.\ {\em On the isomorphism problem for the ring of monomial representations}. J. Algebra, 333:427-457, 2011.

\bibitem{RagValGlobal} A. G. Raggi-Cárdenas, L. Valero-Elizondo.\ {\em Isomorphisms of global representation rings.} Comm. Algebra, 2004:643-656, 2004.

\bibitem{RagVal1} A. G. Raggi-Cárdenas, L. Valero-Elizondo.\ {\em Two non-isomorphic groups of order 96 with isomorphic tables of marks and non-corresponding centers and abelian subgroups}. Comm. Algebra, 45(10):4362-4369, 2017.

\bibitem{Thev} J. Thévenaz.\ {\em Isomorphic Burnside rings}. Comm. Algebra, 16(9):1945-1947, 1988.

\end{thebibliography}
\end{document}